\theoremstyle{plain}
\newtheorem{theorem}{Theorem}
\newtheorem{corollary}{Corollary}
\newtheorem{proposition}{Proposition}
\newtheorem{lemma}{Lemma}
\theoremstyle{definition}
\newtheorem{defn}{Definition}
\theoremstyle{remark}
\newtheorem{remark}{Remark}
\renewcommand{\phi}{\varphi}
\title{On the Nature of Saturated $2^k$- Factorial Designs for Unbiased Estimation of Non-negligible Parameters}
\author{Francois Domagni \qquad A. S.  Hedayat \qquad  Bikas Kumar Sinha\\
Department of Mathematics, Statistics, and Computer Science \\
University of Illinois at Chicago \\
{\tt fdomag2@uic.edu, hedayat@uic.edu, bikassinha1946@gmail.com }
}
\date{\today}
\begin{document}

\maketitle

\begin{abstract}
We contemplate an experimental situation in a $2^k$-factorial experiment with acute resource crunch so that we need to conduct just a saturated design [SD] - with the understanding that precision of the estimates cannot be estimated from the data.  It is known beforehand which effect(s)/interaction(s) are likely to be negligible. We examine the flexibility to the extent that an experimenter can make a choice of an SD in order to retain information on all the remaining [non-negligible] effects/interactions.

\smallskip

\emph{Keywords and phrases:} Saturated designs; Negligible effects; Admissible set for deletion; Relative efficiency; Hadamard matrices
\end{abstract}

\section{Introduction}
\label{S:intro}

Two-level factorial designs (TLFD) are widely used in scientific and industrial experimentation for various reasons. Standard text books deal with this topic at various lengths - covering such concepts as (i) Unreplicated Full Factorials, (ii) Replicated Full Factorials, (iii) Blocking, (iv) Total, Partial and Balanced/Unbalanced Confounding, (v) Fractional Factorials etc. Practitioners primarily use TLFD at an early stage of an experimentation to screen potential factors that are involved in the system being investigated. The statistical models underlying TLFD are simple and subject to relatively weak assumptions. Each factor - whether quantitative or qualitative - is assumed to have two levels that are conveniently coded as $-1$ or $1$ in the design matrix. The estimators of the effects/interactions  are contrasts that are naturally simple to interpret. The effect of a factor is interpreted as a measure of the change in the response variable due to variation of the factor from low to high - averaged over all other factor levels. \\

In practice investigators postulate, for one reason or the other, that certain effects (usually higher order interactions) are unimportant or negligible. When that is the case it is desirable for them to conduct the experiment with the least number of runs that would ensure the  unbiased estimation of the important effects that is, non-negligible effects of interest. Regular Fractional Factorial Designs (RFFD) are used in this kind of situation and there is a vast literature available on RFFD. See, for example,  Montgomery [\cite{Montgomery}]. 
  
In the framework of a two-level factorial design, one of the drawbacks of RFFD is that the number of runs needed to conduct the experiment is necessarily a multiple of $4$. Thus when the important effects to be estimated are identified beforehand, using an RFFD may lead to the use of more resources than the bare minimum needed for the estimation of the important effects . For instance if the number of factors is $k=5$ and the only important effects are the main effects plus the mean then using an $2^{5-2}_{III}$ RFFD of Resolution $III$ would require $8$ runs for the experiment. This would actually estimate the $5$ main effects plus the mean but also can provide estimate two other effects that are known to be negligible. \\

A Saturated Designs (SD) could be used in case of scarce resources when it is clear to the investigator which effects are important and non-negligible. However it turns out that the identification of an SD is not a trivial problem. Numerous papers available in the literature discuss how to construct SDs under certain conditions. See  Hedayat and Pesotan [\cite{Hedayat1}] and  [\cite{Hedayat2}]. In addition various computer algorithms have been developed to search for SDs in the TLFD set-up. Some of these are SPAN, DETMAX. See Hedayat and  Haiyuan Zhu [\cite{Hedayat3}]. It is worth pointing out that when RFFD are used to estimate a certain vector parameter of interest, the estimator of each effect except the mean is a contrast in terms of the runs and it is clear to practitioners that each estimator measures an interaction or the change in the response variable due to the variation of some factor from low to high. The common practice available in the literature is to choose an SD for which the underlying design matrix is non-singular. The Ordinary Least Squares (OLS) method is then used to obtain the estimator of  the  vector parameter of interest. \\

The question we may ask is the following  " \textit{is the estimator [blue] of each parameter (except the common mean)  in a SD model a contrast in terms of the runs?}". Well, if the design matrix is a Hadamard matrix then the answer is trivially 'yes' since the SD in that case can be seen as an RFFD. However when the design matrix is not a Hadamard matrix the estimator of the vector parameter is given by $\hat{\beta_1} = (D^TD)^{-1}D^TY = D^{-1}Y$ where $D$ is the saturated design matrix. In practice it is desirable to practitioners to have the estimator of each estimable effect as contrast in terms of the runs for the sake of interpretation. It is interesting to verify that it is indeed so even when the design is not based on a Hadamard matrix. This can be seen as follows. \\

Let $D$ be a square non-singular matrix with its first column as a vector of $1$'s. Let $e_1$ denote the column vector of the same dimension with elements $(1, 0, 0, \ldots, 0)$. Then $De_1$ is a vector of $1$'s. Hence, whenever $D$ is non-singular, $D^{-1}$ times vector of $1$'s= $e_1$. This is equivalent to the statement that the estimates of all model parameters [except the common mean ] are linear observational contrasts, irrespective of the nature of elements of the matrix $D$.   \\ 

The rest of the paper is organized as follows. In Section 2, we develop general theory for 'deletion of exact number of runs' so as to ensure estimability of all non-negligible effects/interactions in a saturated $2^k$-factorial experiment. This is done through identification of the runs to be deleted, for any given collection of non-negligible effects/interactions. The choice is not unique. However, we do not necessarily address the question of 'optimal choice'. In Section 3, we take up some illustrative examples in the case of $2^3$- and $2^4$-factorial experiments.  

\section{General theory for identification of runs for deletion - retaining estimability of non-negligible effects/interactions in a saturated design}

We begin by listing several properties of a Hadamard matrix of order $N=2^k$. 

\begin{theorem}
\label{theorem1}
Let $H_N$ be a Hadamard matrix of order $N$ that is partitioned into block matrices as :
\\
$H_N = \begin{bmatrix}
D & E \\
V & C
\end{bmatrix} 
$ where $D$ and $C$ are square matrices of order $n$ and $d$ respectively such that $n\geq d$. Then we have the following results: 
\\
\begin{enumerate}
\item $\boldsymbol{|det(D)| = N^{\frac{n-d}{2}}|det(C)|}$
\item if $D$ is invertible then $C$ is also invertible and the inverse of $D$  is given by:
\\
 $\boldsymbol{  D^{-1}= \frac{1}{N}[D-EC^{-1}V]^T }$
 \end{enumerate}
\end{theorem}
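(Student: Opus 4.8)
The plan is to read off the defining orthogonality relations of a Hadamard matrix in block form and to prove part~1 first, \emph{without} any invertibility hypothesis, so that the implication ``$D$ invertible $\Rightarrow$ $C$ invertible'' needed in part~2 comes essentially for free. Since $H_N H_N^T = N I_N$ and $H_N^T H_N = N I_N$, writing $H_N^T = \begin{bmatrix} D^T & V^T \\ E^T & C^T \end{bmatrix}$ and multiplying out the blocks yields the three identities I will actually use:
\[ DD^T + EE^T = N I_n, \qquad VD^T + CE^T = 0, \qquad E^TE + C^TC = N I_d, \]
coming respectively from block $(1,1)$ and block $(2,1)$ of $H_N H_N^T$ and from block $(2,2)$ of $H_N^T H_N$.

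For part~1 the key device is Sylvester's determinant identity, which lets me trade the $n\times n$ matrix $EE^T$ for the $d\times d$ matrix $E^TE$. Starting from $\det(DD^T) = \det(N I_n - EE^T) = N^n\det(I_n - \tfrac1N EE^T)$ and applying $\det(I_n - \tfrac1N EE^T) = \det(I_d - \tfrac1N E^TE)$, I obtain $\det(DD^T) = N^{n-d}\det(N I_d - E^TE)$. The relation $E^TE + C^TC = N I_d$ then replaces $N I_d - E^TE$ by $C^TC$, so that $(\det D)^2 = N^{n-d}(\det C)^2$, which is exactly $|\det(D)| = N^{(n-d)/2}|\det(C)|$. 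Because this identity carries no invertibility assumption, it immediately shows $\det D \neq 0$ forces $\det C \neq 0$, establishing that $C$ is invertible whenever $D$ is.

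For part~2, with $C$ now known to be invertible, I verify the formula by a direct multiplication rather than by a full block-inverse computation. From $VD^T + CE^T = 0$ I get $C^{-1}VD^T = -E^T$, hence
\[ (D - EC^{-1}V)D^T = DD^T - E\,(C^{-1}VD^T) = DD^T + EE^T = N I_n. \]
Thus $D - EC^{-1}V = N(D^T)^{-1} = N(D^{-1})^T$; transposing both sides and using $\big((D^T)^{-1}\big)^T = D^{-1}$ gives $(D - EC^{-1}V)^T = N D^{-1}$, which is the claimed $D^{-1} = \tfrac1N (D - EC^{-1}V)^T$.

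I expect no genuine obstacle: the only points demanding care are bookkeeping ones, namely selecting the correct orthogonality relation (from $H_N H_N^T$ versus $H_N^T H_N$) for each block, and respecting the logical order so that part~1 supplies the invertibility of $C$ \emph{before} $EC^{-1}V$ is written down in part~2. The one nonroutine idea is the Sylvester-identity rank swap, which is precisely what converts an apparently $n$-dimensional determinant into the $d$-dimensional quantity $\det(C)$ appearing in the statement.
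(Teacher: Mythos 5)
Your proof is correct, and it reaches both conclusions by a route that is genuinely different from (and in one respect cleaner than) the paper's. For part 1 the paper works with $V$ rather than $E$: it derives $D^TD = NI_n - V^TV$ and $CC^T = NI_d - VV^T$ from the orthogonality relations and then counts eigenvalues, using the fact that $V^TV$ and $VV^T$ share the same nonzero spectrum, to express both $\det(D^TD)$ and $\det(CC^T)$ in terms of the common eigenvalues $\gamma_1,\dots,\gamma_r$. That eigenvalue count is essentially an in-line proof of the same rank-swap fact you invoke as Sylvester's identity $\det(I_n - \tfrac1N EE^T) = \det(I_d - \tfrac1N E^TE)$; the two arguments are interchangeable, yours being shorter because the identity is quoted rather than re-derived. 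The more substantive difference is in part 2: the paper applies the $2\times 2$ block-inversion formula to $H_N$ and matches the $(1,1)$ block against $\tfrac1N H_N^T$, which implicitly presupposes that the Schur complement $D - EC^{-1}V$ is invertible before that fact has been established. Your direct computation $(D - EC^{-1}V)D^T = DD^T + EE^T = NI_n$, obtained from $C^{-1}VD^T = -E^T$, avoids that circularity entirely and proves the invertibility of $D - EC^{-1}V$ as a byproduct, needing only the invertibility of $C$ supplied by part 1. Both approaches are sound, but your verification is the more economical and logically self-contained one.
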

\begin{proof}
We have  $H_N$ is a Hadamard matrix implies that  $H_NH_N^T = H_N^TH_N = NI_N$. Therefore we have :

  $  D^TD = -V^TV + (n + d)I_n $
  
   $  D^TD -\lambda I_n = -V^TV + (n + d)I_n -\lambda I_n$
   
    $  D^TD -\lambda I_n = -V^TV + (n + d)I_n -\lambda I_n$
    
     $  D^TD -\lambda I_n =(n+d - \lambda ) I_n -V^TV $
     \\
     Let   $\gamma = n+d -\lambda $ then we have the following equation:
     
     \begin{equation}
     \label{eigen}
       D^TD -\lambda I_n =\gamma I_n  -V^TV 
     \end{equation}
     
 From Equation (\ref{eigen}) if $\gamma$ is an eigenvalue of $V^TV$ then $\lambda = (n +d)-\gamma $ is eigenvalue for $D^TD$ and   vice versa. 
     \\
     \\
 Now assume $rank(V) = r$. Then    $ r \leq d$ the  $n\times n$ matrix  $V^TV$ has $r$  non-zero eigenvalues of $ \gamma_1,\cdots ,\gamma_r$  and $n-r$ zero eigenvalues. We deduce that $D^TD$ has $n-r$ eigenvalues $\lambda_i = n+d = N$; $ i = 1, \cdots , n-r$. The remaining $r$ eigenvalues of $D^TD$ are
 \\
 $N- \gamma_1,\cdots ,N-\gamma_r$ .
 Since the determinant of a square matrix is the product of its eigenvalues it turns out that 
 \begin{equation}
 det(D^TD) = N^{n-r}\prod_{i=1}^{r}(N-\gamma_i)
 \end{equation}
 By analogy we have 
  $  CC^T -\lambda I_n =(n+d -\lambda)I_d -VV^T $
      Let   $\gamma = n+d -\lambda $ then we have the following equation:
  \begin{equation}
  \label{eigen1}
    CC^T -\lambda I_n = \gamma I_d -VV^T 
  \end{equation}
    Furthermore,  since   $rank(V) = r \leq d$,  the  $d\times d$ matrix  $VV^T$ has $r$  non-zero eigenvalues of $ \gamma_1,\cdots ,\gamma_r$  and $d-r$ zero eigenvalues. 

Therefore from Equation (\ref{eigen1}) the matrix $CC^T$   has $d-r$ eigenvalues $\lambda_i = n+d = N$; $ i = 1, \cdots , n-r$. The remaining $r$ eigenvalues of $D^TD$ are
   \\
    $N- \gamma_1,\cdots ,N-\gamma_r$ .
   \\
   It turns out that
   \begin{equation}
   \label{eigen2}
   det(CC^T) = N^{d-r}\prod_{i=1}^{r}(N-\gamma_i)
   \end{equation}
    By  Equations (\ref{eigen1}) and (\ref{eigen2}) we get 
    $\prod_{i=1}^{r}(N-\gamma_i) = N^{r-d} det(CC^T)$ which implies that
    
   $det(D^TD) = N^{n-r} N^{r-d} det(CC^T)  = N^{n-d} det(CC^T) $.
   \\
   It follows that  $|det(D)| = N^{\frac{n-d}{2}} |det(C)| $.
   \\
  To prove the second part of the theorem,   we have $|det(D)| = N^{\frac{n-d}{2}} |det(C)|$ . This  means that $D$ is invertible if and only if $C$ is invertible. Thus
 since $H_N$ is Hadamard we use the inversion formula for block matrices to get 
 \\
 $H_N^{-1} = \begin{bmatrix}
D & E \\
V & C
\end{bmatrix}^{-1} =  \begin{bmatrix}
[D- EC^{-1}V]^{-1} &-[D- EC^{-1}V]^{-1} EC^{-1} \\
-C^{-1}V[D- EC^{-1}V]^{-1}  & [C-VD^{-1}E]^{-1}
\end{bmatrix} = \frac{1}{N}H_N^T = \frac{1}{N}\begin{bmatrix}
D^T & V^T \\
E^T & C^T
\end{bmatrix}
  $
  \\
  $\frac{1}{N}D^T =[D- EC^{-1}V]^{-1} $
  \\
  $\frac{1}{N} C^T =  [C-VD^{-1}E]^{-1}    $
  \\
  The results follow easily.
\end{proof}
\begin{corollary}
\label{corol3}
Let $\Theta_n$ be the set of non-singular matrices of order n  with entries from $\lbrace -1, 1\rbrace $ for which the first column is $1_n$.  Let $D\in \Theta_n$. Then there exists a Hadamard matrix $H_N$ of the form 
$H_N = \begin{bmatrix}
D & E \\
V & C
\end{bmatrix}
$  such that  :

 $\boldsymbol{D^{-1}=\frac{1}{N}[D-EC^{-1}V]^T }$.
\end{corollary}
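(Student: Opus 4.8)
The plan is to reduce the statement to a single structural claim: that any $D\in\Theta_n$ occurs as the leading (top-left) $n\times n$ block of some Hadamard matrix $H_N$. Once this is established the displayed identity is immediate. Indeed, if $H_N=\begin{bmatrix} D & E\\ V & C\end{bmatrix}$ is Hadamard and $D$ is non-singular, then part (2) of Theorem \ref{theorem1} applies verbatim: its hypothesis is exactly the invertibility of $D$, which forces that of $C$ through part (1) (since $|\det D| = N^{(n-d)/2}|\det C|$), and it yields $D^{-1}=\frac{1}{N}[D-EC^{-1}V]^T$. So the whole content of the corollary is the embeddability of $D$ into a Hadamard matrix with $D$ in the top-left corner.

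To prove embeddability I would exploit the description of the full $2^k$ model matrix as a Sylvester--Hadamard matrix. Index the rows (runs) and columns (effects) of $H_{2^k}=H_2^{\otimes k}$ by vectors $x,y\in\mathbb{F}_2^{k}$, so that its $(x,y)$ entry is $(-1)^{\langle x,y\rangle}$, with the mean corresponding to $y=0$. Writing the given matrix as $D_{ij}=(-1)^{b_{ij}}$ produces an $n\times n$ matrix $B=(b_{ij})$ over $\mathbb{F}_2$ whose first column vanishes, because the first column of $D$ is $1_n$. The task is then to realize $B$ as a table of inner products $b_{ij}=\langle x_i,y_j\rangle$.

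Concretely, I would delete the (zero) first column of $B$ and take a rank factorization $\tilde B=X\tilde Y^{T}$ over $\mathbb{F}_2$, with $X$ of size $n\times k$ and $\tilde Y$ of size $(n-1)\times k$, where $k=\mathrm{rank}_{\mathbb{F}_2}(\tilde B)$. Setting $y_1=0$, letting $y_2,\dots,y_n$ be the rows of $\tilde Y$ and $x_1,\dots,x_n$ the rows of $X$, gives $\langle x_i,y_j\rangle=b_{ij}$ for all $i,j$; hence the submatrix of $H_{2^k}$ picked out by rows $x_i$ and columns $y_j$ is exactly $D$. Because $D$ is non-singular its rows and columns are pairwise distinct, which forces the $x_i$ to be distinct and the $y_j$ to be distinct (two equal $x_i$, resp.\ $y_j$, would duplicate a row, resp.\ column, of $B$ and hence of $D$); in particular $n\le 2^k=N$, so the chosen rows and columns are genuine distinct lines of $H_N$. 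A simultaneous row and column permutation, which preserves the Hadamard property, then moves this $n\times n$ submatrix into the leading block, yielding $H_N=\begin{bmatrix} D & E\\ V & C\end{bmatrix}$ and completing the reduction.

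The main obstacle is this embeddability step, i.e.\ convincing oneself that \emph{every} sign pattern with a constant first column is attainable inside a Sylvester--Hadamard matrix; the crux is the standard fact that an arbitrary $\mathbb{F}_2$-matrix admits an inner-product factorization, together with the bookkeeping that non-singularity of $D$ guarantees the selected runs and effects are distinct, so that a literal submatrix, not a multiset, is produced. The remaining permutation and the appeal to Theorem \ref{theorem1} are routine.
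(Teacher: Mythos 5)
Your proof is correct, and its overall architecture matches the paper's: both reduce the corollary to showing that $D$ occurs as the leading block of some Hadamard matrix and then invoke Theorem \ref{theorem1}. The difference lies entirely in how the embedding is produced. The paper's route is the more elementary one: it takes the ambient matrix to be the full $2^n$-factorial model matrix (the $2^n\times n$ array $M_n$ of all $\pm 1$ $n$-tuples, bordered by $1_N$ and the Schur products of its column subsets), matches the $n$ columns of $D$ to the $n$ columns of $M_n$, and observes that non-singularity forces the rows of $D$ to be distinct $n$-tuples and hence rows of $M_n$; this yields an ambient order $N=2^n$ with no further work. You instead write $D_{ij}=(-1)^{b_{ij}}$ and rank-factorize the exponent matrix over $\mathbb{F}_2$, realizing $D$ as a genuine submatrix of the Sylvester matrix $H_2^{\otimes k}$ with $k=\mathrm{rank}_{\mathbb{F}_2}(\tilde B)\le n-1$. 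This costs more machinery (the factorization plus the distinctness bookkeeping, which you carry out correctly), but it buys a much smaller host: $N=2^k\le 2^{n-1}$ rather than $2^n$. That is not cosmetic, since the identity requires inverting $C$, a matrix of order $N-n$, so the smaller Hadamard completion makes the formula far more usable; your version also shows that the columns of $D$ need not be matched to main-effect columns. One caveat common to both arguments: Theorem \ref{theorem1} is stated under the hypothesis $n\ge d$, which fails in this application ($d=N-n$ generally exceeds $n$); the hypothesis is never actually used in the theorem's proof, so both arguments go through, but strictly speaking the corollary invokes the theorem outside its stated scope.
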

\begin{proof}
The $\lbrace -1, 1\rbrace$-matrix $M_n$ of order $2^n \times n$ formed with all the $n$-tuples  from the set $\lbrace -1, 1\rbrace$ can be extended to a Hadamard matrix $H_N$.  Let $\mathcal{C} =  m_1, \cdots, m_n$ be the set containing  the  columns of $M_n$. To construct $H_N$, 
it just suffices to  add the schur product of any non-empty subset of $\mathcal{C}$ as a new column to  $M_n$ as well as the column  vector $1_N$. It is not hard to see that since $D$ is non-singular of order $n$ its rows appear without repetition. Therefore each row of $D$ is also a row of $M_n$. It turns out that for any non-singular $\lbrace -1, 1\rbrace$-matrix $D$ there exists a Hadamard  matrix $H_N = \begin{bmatrix}
D & E \\
V & C
\end{bmatrix}
$.
By the last result stated in Theorem 1, $\boldsymbol{D^{-1}=\frac{1}{N}[D-EC^{-1}V]^T }$.
\end{proof}

\begin{lemma}
\label{contrast}
Consider a Hadamard matrix of the form $H_N = \begin{bmatrix}
D & E \\
V & C
\end{bmatrix}
$  where $D$ is invertible.  Assume the first columns of $H_N$ and $D$ are respectively $1_N$ and $1_n$ Then  we have :
\\
$\boldsymbol{(D-EC^{-1}V)^T1_n = \begin{bmatrix}
N\\
0_{n-1}
\end{bmatrix}} $

\end{lemma}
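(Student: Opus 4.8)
The plan is to reduce the claim to the inverse formula already established in Theorem \ref{theorem1}. By hypothesis $D$ is invertible, so part 2 of that theorem applies and gives $D^{-1} = \frac{1}{N}[D - EC^{-1}V]^T$, or equivalently $[D - EC^{-1}V]^T = N\,D^{-1}$. Substituting this into the left-hand side of the claimed identity immediately rewrites the target as
\begin{equation*}
(D - EC^{-1}V)^T 1_n = N\,D^{-1} 1_n,
\end{equation*}
so the entire lemma is reduced to evaluating the single vector $D^{-1} 1_n$.

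The second ingredient is the first-column structure, which I would invoke exactly as in the remark of the introduction. Since the first column of $D$ is $1_n$, we have $D e_1 = 1_n$, where $e_1 = (1, 0, \ldots, 0)^T \in \RR^n$. Multiplying on the left by $D^{-1}$ (which exists by assumption) yields $D^{-1} 1_n = e_1$. This is the only place the hypothesis ``the first column of $D$ is $1_n$'' is used, and it is precisely what makes the right-hand side collapse to a single nonzero entry.

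Combining the two steps gives $(D - EC^{-1}V)^T 1_n = N\,D^{-1} 1_n = N e_1 = (N, 0, \ldots, 0)^T$, which is the asserted vector $\begin{bmatrix} N \\ 0_{n-1} \end{bmatrix}$. I would remark that the hypothesis on the first column of $H_N$ being $1_N$ is consistent with (and in fact forces) the first column of $D$ to be $1_n$, so no separate verification is needed there. There is essentially no hard obstacle in this proof: it is a direct corollary of Theorem \ref{theorem1} once one recognizes that the block expression $[D - EC^{-1}V]^T$ is nothing but $N\,D^{-1}$, after which the result is pure linear algebra driven by the special first column.
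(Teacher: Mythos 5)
Your proof is correct, but it follows a different route from the paper's. You reduce the identity to Theorem \ref{theorem1}(2): since $D$ is invertible, $(D-EC^{-1}V)^T = N D^{-1}$, and then the whole lemma collapses to the elementary observation $D e_1 = 1_n \Rightarrow D^{-1}1_n = e_1$, which is exactly the remark the paper itself makes in the introduction about estimates being observational contrasts. The paper's proof of the lemma, by contrast, never invokes the inverse formula: it expands $(D-EC^{-1}V)^T 1_n = D^T 1_n - V^T(C^{-1})^T E^T 1_n$ and uses the orthogonality relations coming from the first column of $H_N$ being $1_N$, namely $D^T 1_n + V^T 1_d = \bigl(N, 0_{n-1}^T\bigr)^T$ and $E^T 1_n + C^T 1_d = 0$, so that the $V^T 1_d$ terms cancel. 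Your argument is shorter and makes transparent that the lemma is really a restatement of $D^{-1}1_n = e_1$ scaled by $N$; the paper's computation has the advantage of being self-contained at the block level (it verifies the identity for the explicit matrix $(D-EC^{-1}V)^T$ without passing through the assertion that this matrix equals $N D^{-1}$, and thus also serves as a consistency check on Theorem \ref{theorem1}(2)). Your closing remark that the hypothesis on the first column of $H_N$ forces the first column of $D$ to be $1_n$ is accurate, since $D$ sits in the top-left block of $H_N$.
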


\begin{proof}

$(D- EC^{-1}V)^T1_n = D^T1_n - V^T(C^{-1})^TE^T1_n$
\\
Since we assume the first column of  $H_N$ is $1_N$ we have 
 $D^T1_n + V^T1_d = \begin{bmatrix}
N\\
0_{n-1}
\end{bmatrix}$ which  implies that  $D^T1_n =\begin{bmatrix}
N\\
0_{n-1}
\end{bmatrix} -V^T1_d $.
 
 Also $E^T1_n + C^T1_d = 0$ implies that $E^T1_n = - C^T1_d$ .
 
 It turns out that $(D-EC^{-1}V)^T1_n =\begin{bmatrix}
N\\
0_{n-1}
\end{bmatrix} -V^T1_d  +  V^T(C^{-1})^T C^T1_d$

$(D-EC^{-1}V)^T1_n = \begin{bmatrix}
N\\
0_{n-1}
\end{bmatrix} -V^T1_d  +  V^T(C^{-1}C)^T 1_d = -V^T1_d + V^T1_d  = 0$
 
 $(D-EC^{-1}V)^T1_n =\begin{bmatrix}
N\\
0_{n-1}
\end{bmatrix}$
\end{proof}
\begin{remark}

The   inverse of a non-singular $\lbrace -1, 1\rbrace$-matrix $D$ is always of the form  $D^{-1}=\frac{1}{N}(D-EC^{-1}V)^T$. 
 Lemma 1 shows that if one of the columns of  $D$   is $1_n$ then   $D^{-1}$ has a row for which the entries sum up to $1$. The entries of any of the remaining rows of $D^{-1}$ sum up to zero. This property is mathematically interesting but most importantly , it shows that in general  the estimator of any effect or interaction except the mean in a saturated design  is a contrast in terms of the runs. This eases the  interpretation of  the results of a saturated design conducted in a two-level factorial setup.

\end{remark}
\textbf{Deletion Algorithm}

In the context of a $2^k$-factorial experiment, suppose the experimenter has identified a subset of, say $n$, factorial effects and interactions which are supposed  to be non-negligible. Each of the remaining [$2^k - n$] effects and interactions is, by default, assumed to be negligible.
Furthermore suppose the  experimenter has minimal resource to carry out an $n$-run saturated design. Naturally the runs must be  chosen so that all the non-negligible effects are unbiasedly estimated. \\
The following steps seem to be plausible to follow :\\
\begin{enumerate}
\item  Write down the vector parameter of general mean, factorial effects and interactions as a column vector of dimension $2^k \times 1$ so that the general mean and all non-negligible parameters form a subset at the top and the negligible effects and interactions appear at the bottom. It is well-known that in a $2^k$-factorial set-up, we have standard representations for the effects and interactions in terms of the $2^k$ observations arising out of the full factorial experiment, if that were the case. \\

\item  Since the experiment is to be based on a suitably chosen subset of $n$ runs i.e., level-combinations of the factors, the experimenter has to make a judicious choice of these runs. Let $Y^{(1)}$ stand for the $n \times 1$ vector of observations realized after performing the experiment with suitably chosen $n$ runs. Let $Y^{(2)*}$ denote the complementary unobserved vector of dimension $(2^k - n) \times 1$   
in this context. \\

\item  The Hadamard matrix $H_N$ of order $N=2^k$ is decomposed in the usual manner wherein the matrix $D$ corresponds to the non-negligible parameters and the matrix $C$ corresponds to the negligible parameters. \\

\item  Let $\theta$ denote the vector parameter of the general mean and all the effects and interactions written in the style of (1) above. We use the notations $\theta^{(1)}$ and $\theta^{(2)}$ for the decompositions described under (1). Then $\theta^{(1)}$ represents the non-negligible effects and interactions whereas $\theta^{(2)}$ represents the 'zero' effects and interactions. \\

Note that $Y^{(1)}$ and $Y^{(2)}$ have already been defined as per this decomposition. Recall the partitioned matrix representation of $H_N$. From Yates' representation of the factorial effects and interactions, it follows that, but for a constant multiplier,  \\

\item  $\mathbb{E}[\frac{1}{N}H_N^TY]=\theta$ i.e., $(i) \frac{1}{N}\mathbb{E}[D^TY^{(1)} + V^TY^{(2)*}]=\theta^{(1)}$; (ii) $\mathbb{E}[E^TY^{(1)} + C^TY^{(2)*}]=0. $\\

\item  From (5)(ii) if $D$ is non-singular then $C$ is also non-singular by Theorem (\ref{theorem1})  and the best linear unbiased predictor [BLUP] of $Y^{(2)*}$ is given by :
 $
  \hat{Y}^{(2)*}  = -(C^{(-1)})^TE^TY^{(1)}
$. \\

\item Therefore, the BLUE of $\theta^{(1)}$ is given by $
\hat{\theta}^{(1)}=\frac{1}{N}(D- EC^{(-1)}V)^TY^{(1)}.
$ 

\item  Further,  the dispersion matrix of $var(\hat{\theta}^{(1)}) \propto [D - VC^{(-1)}E]^T[D - VC^{(-1)}E]$. \\

\item Incidentally, for d-optimal choice of the matrix $D$, or equivalently, of the matrix $C$, we need to minimize  $|det[(D - VC^{(-1)}E)|$ which is equivalent to maximizing $|det(C)|$.  \\
\end{enumerate}
Thus far we have explained the general principle underlying choice of the $D$-matrix for any given vector $\theta^{(2)}$ of negligible parameters. A little reflection suggests that the choice of the matrices (i) $\begin{bmatrix} D & E\end{bmatrix}$ of order $n \times N$ and (ii) $\begin{bmatrix} V& C \end{bmatrix}$ of order $N-n \times N$ are dictated by the two sets of non-negligible and negligible parameters. However, their splitting can be very much arbitrary subject to fulfilling the non-singularity conditions by the square matrices $D$ and $C$. We give three simple illustrations below. \\

Illustrations : $2^3$ - factorial experiment \\
(i) Only one effect / interaction is negligible: It may be seen that any one run can be deleted. \\
(ii) A pair of effects/interactions are negligible : It is not true that any two runs can be deleted. For example, if $F_{23}$ and $F_{123}$ are negligible, we may only delete the following pairs of runs : \\

$$
{(000,100),(000,110),(000,101),(000,111),(100,010), (100,001),(100,011),(010,110)} \\
$$
$$
{(010,101),(010,111), (110,001),(110,011),(001,101),(001,111),(101,011),(011,111)}.
$$

It is interesting to note that the runs $(011)$ and $(111)$ may as well be deleted. \\

(iii) When only the mean effect and three main effects are non-negligible, we may delete any of the following sets of four runs :
$$
{(000,100,010,001);(000,100,110,111);(000,010,110,111);(000,001,101,111)}
$$

Interestingly, the runs $(110), (101), (011), (111)$ do not form an admissible set for deletion. 

\section{Maximization criterion - an optimality consideration}

\subsection{A general rule for admissible set}
Theorem (\ref{theorem1}) states that whenever  a Hadamard matrix  $H_N$ is partitioned into block matrices  as  $\begin{bmatrix}
D & E \\
V & C
\end{bmatrix}$  then $|det(D)| = N^{\frac{n-d}{2}}|det(C)|$ and that the  matrix $D$ is non-singular if and only if the matrix $C$ is non-singular. The take-away message here in terms of a saturated design  is that there is direct relationship between a saturated design matrix $D$ and the matrix $C$. What it means is that when the experimenter is deciding which runs to keep in a saturated  design he has two equivalent options he can choose from. The first  choice is to  directly search  the runs that will make the matrix $D$ underlying the parameters of interest non-singular. The second choice is to select a set of runs so that the matrix $C$ underlying the negligible parameters is non-singular. The complement of such set  of runs is then the desired saturated design. Thus when the experimenter is trying to search for a saturated design  matrix $D$ of order $n$ then if $d = 2^k - n$  is such that  $n > d$,  it is advantageous to the experimenter to deal with a less complex problem by searching  for the matrix $C$ of order $d$. The design matrix can then be taken for granted by just taking the complement of $C$ which is the matrix $D$ in the partition of the Hadamard matrix $H_N$ above.  Furthermore since  $|det(D)| \propto |det(C)|$ it means that the Fisher information contained in a saturated design  $D$ is proportional to the amount of information contained in its complement matrix $C$. This is important if one desires to  find a d-optimal design  or classify saturated designs by their amount of Fisher information. In fact if $n> d$ the classification complexity of saturated design matrices $Ds$ boils down to the study of the determinant of the matrices $Cs$ which is less complex. 
  For convenience we make the following definitions:
\begin{defn}
Consider a $2^k$-factorial experiment where the full vector parameter $\theta$ is partitioned as $\theta = \begin{bmatrix} \theta^{(1)} &\theta^{(2)}\end{bmatrix}^T$ where $\theta^{(1)}$ is unknown and non-negligible and $\theta^{(2)}$ is negligible with cardinality $| \theta^{(1)} | = n$ and $| \theta^{(2)} | = 2^k - n$ . Furthermore suppose the set of runs $R$ is partitioned as $R = \begin{bmatrix} R_1 & R_2\end{bmatrix}$ with cardinality $|R_1| = n$ and $|R_2| = 2^k-n$ such that the Hadamard matrix $H_N$ underlying $\theta$ and $R$ is written as :
\\
\begin{tabular}{c|| c c}
      & $\theta^{(1)}$& $\theta^{(2)}$\\
      \hline\hline
$R_1$ & $D$ & $E$\\
$R_2$ & $V$ & $C$
\end{tabular} where $D$ and $C$ are square matrices of order $n$ and $2^k-n$ respectively. 
We make the following definitions :
\begin{enumerate}
\item  We shall say the set $R_2$ is an admissible set for deletion if the matrix $C$ is non-singular.
\item If $R_2$ is admissible then its complement $R_1$ is a saturated design for the estimation of $\theta^{(1)}$.
\item If $|det(C)|$ is maximal we shall say $R_2$ is a d-optimal admissible set for deletion
\item  If $|det(C)|$ is maximal then $R_2$ is a d-optimal admissible set and we shall say that its complement $R_1$ is a d-optimal design for the estimation of $\theta^{(1)}$.
\end{enumerate}
\end{defn}






\subsection{Example of classification of saturated designs by determinant for mean, main effects and the two-factor interactions:  $2^4$-experiment case}
\label{example}
Suppose for one reason or the other the experimenter is interested in  a saturated design with $k=4$  factors where the important effects are  $F_0 , F_1 , F_2, F_3 , F_4 , F_{12}, F_{13}, F_{14}, F_{23}, F_{24}, F_{34}$ and  the negligible effects are $ F_{123}, F_{124},  F_{134}, F_{234},  F_{1234} $.  For such a problem the experimenter needs to find 11 runs out of the 16 possible runs that would make the underlying design matrix of order 11 non-singular. It turns out that the task of finding such a design matrix is computationally more involved than finding an admissible set for the 5 negligible parameters. Since there are only 5 negligible effects, a shortcut to finding the desired  design matrix would be to first take a look at the $2^4$-full factorial design matrix which is a Hadamard matrix and try to come up with a non-singular design matrix for the negligible effects $F_{123}, F_{124},  F_{134}, F_{234},  F_{1234} $.  The complement of such design matrix would be the desired design matrix to estimate the important effects and interactions. 
 
In the  first  $2^4$-Hadamard matrix in Figure (\ref{hadamard4}) , we display in bold the $C$- matrix $C_1$ underlying the runs\\
 $\lbrace 1101, 0011, 1011, 0111, 1111 \rbrace $  and the interactions $\lbrace F_{123}, F_{124}, F_{134}, F_{234}, F_{1234} \rbrace $ . \\
$C_1 = \begin{bmatrix}
-1&1&-1&-1&-1\\
1&1&-1&-1&1\\
-1&-1&1&-1&-1\\
-1&-1&-1&1&-1\\
1&1&1&1&1
\end{bmatrix}$ .
\\
It is not hard to see that this particular $C$-matrix is singular since the first and the last columns are the same. Thus the set of runs $\lbrace 1101, 0011, 1011, 0111, 1111 \rbrace $ is not an admissible set  and so its complement set of runs 
$$
\lbrace 0000, 1000, 1100, 0010, 1010, 0110, 1110, 0001, 1001, 0101 \rbrace 
$$ 
is not a valid set for  the estimation of the mean, the main effects and the two-factor interactions. In fact by Theorem (\ref{theorem1}) the $D$-matrix underlying this set is also singular.

On the other hand, the $C$-matrix $C_2$ displayed in bold in the second $2^4$-Hadamard matrix in Figure (\ref{hadamard4})  underlies the runs 
\\ $\lbrace 0000, 1100, 1010,  1001, 1111 \rbrace $  and the interactions $\lbrace F_{123}, F_{124}, F_{134}, F_{234}, F_{1234} \rbrace $ . \\

$C_2 = \begin{bmatrix}
-1&-1&-1&-1&1\\
-1&-1&1&1&1\\
-1&1&-1&1&1\\
1&-1&-1&1&1\\
-1&-1&-1&1&-1
\end{bmatrix}$ .
\\
The absolute value of the determinant of this choice of $C$-matrix is $|det(C_2)| = 48$.  Therefore, the set of runs $\lbrace 0000, 1100, 1010,  1001, 1111 \rbrace $ is an admissible set for deletion. \\

It is further observed that the choice of the matrix $C$ leading to an admissible set for deletion,  is not necessarily unique.  In that case, we might like to make a judicial choice for it. For this we refer to the dispersion matrix of the blue of $\theta^{(1)}$ and minimize the generalized variance. 

It is readily seen that it amounts to maximization of  the absolute value of $det(C)$ where $C$ is a square matrix of dimension $N-n$ with elements  in $\lbrace -1, 1\rbrace$. \\

It is a well known result that the maximal  absolute value of the determinant of $\lbrace -1, +1\rbrace$-matrices of order 5 is $48$ and It is easy to verify that computationally.

Thus the absolute value of the corresponding  $D$-matrix $D_2$ is $|det(D_2)| = 16^{\frac{11-5}{2}}\times 48$.
We may thus conclude that the set \\
 $$
\lbrace 1000, 0100,  0010, 0110,  1110, 0001,  0101, 1101, 0011, 1011, 1111 \rbrace 
$$ 
is  d-optimal design for the estimation of $F_0 , F_1 , F_2, F_3 , F_4 , F_{12}, F_{13}, F_{14}, F_{23}, F_{24}, F_{34}$.

\begin{center}
\begin{figure}
\includegraphics[trim= {0, 10cm, 0, 0} , clip, scale =0.7]{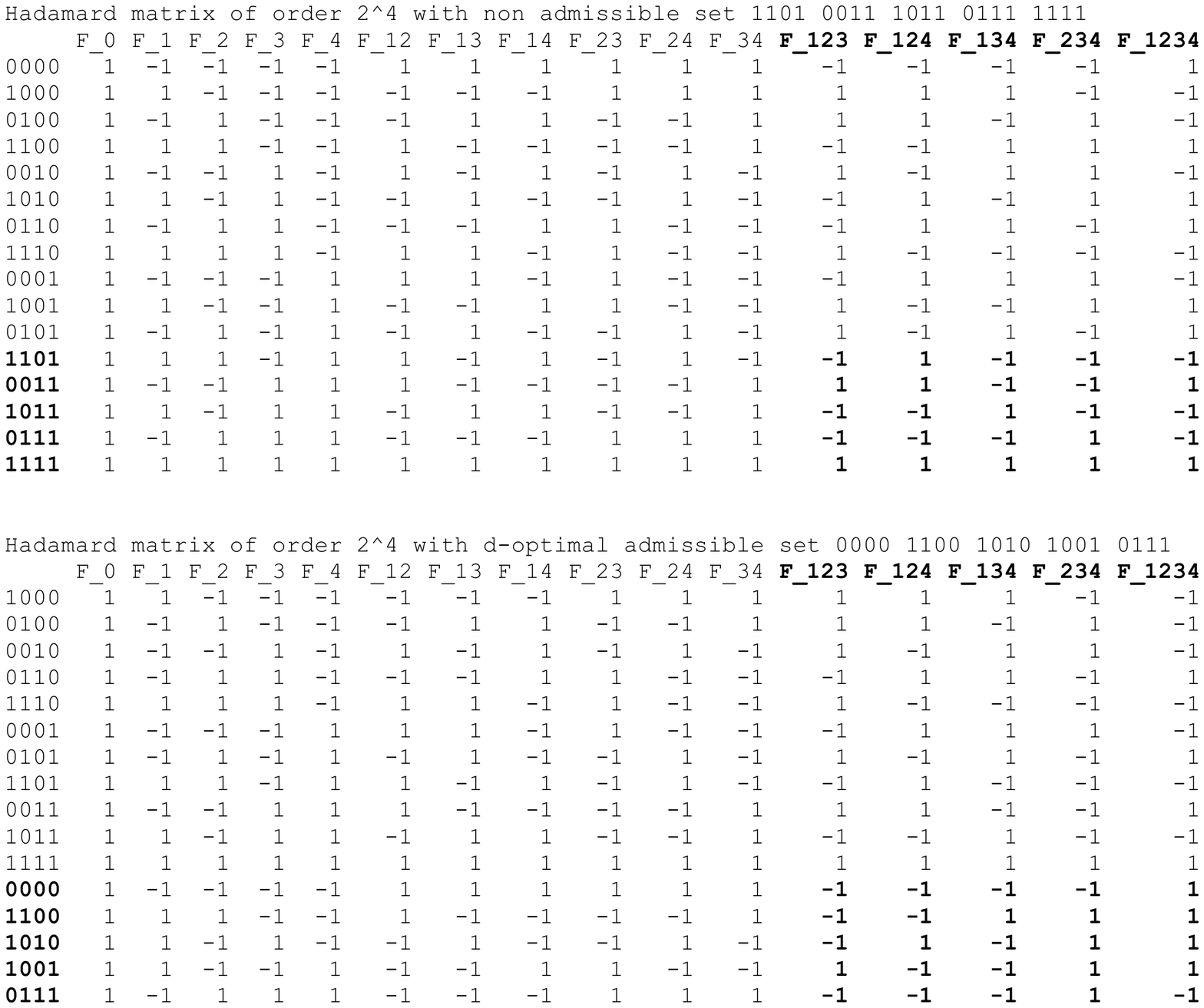}
\caption{Hadamard matrix of order $2^4$}
  \label{hadamard4}
  \end{figure}
\end{center}
\begin{center}
\begin{figure}

\includegraphics[trim= {0, 12cm, 0, 0} , clip, scale =0.7]{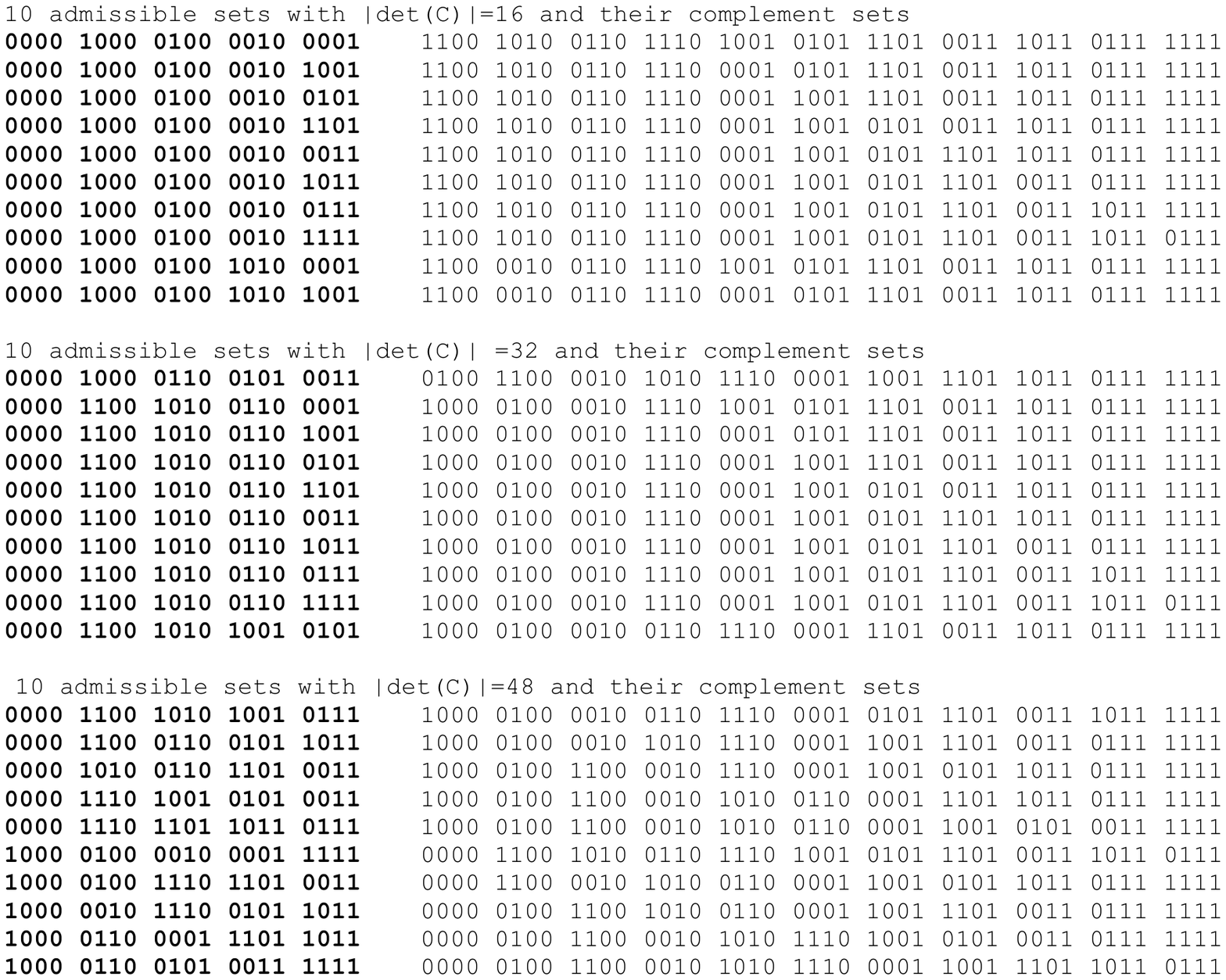}
\caption{Classification of  saturated designs by determinant}
\label{class}
\end{figure}
\end{center}

The spectrum  $S_n$ of the determinant function of  $\lbrace -1, +1\rbrace$-matrices $M_n$ of order $n$ is defined to be the set of values taken by $|det(M_n)|/2^{n-1}$.  It is well known in the literature that the spectra $S_5 = \lbrace 0, 1, 2, 3\rbrace$ and\\ 
$S_{11}= \lbrace 0,\cdots, 268 ;\quad 270,\cdots , 276; \quad 278,\cdots 280; \quad 282,\cdots, 286 ;\\
 288, 291; \quad 294, \cdots, 297; \quad 304, 312, 315, 320\rbrace$ . See Orrick [\cite{orrick1}]. It is important to point out that the absolute value of the determinant of $\lbrace -1, +1\rbrace$-matrices  of order $11$ can take up to $296$ values. Moreover  the absolute value of the determinant of $\lbrace -1, +1\rbrace$-matrices  of order $5$ can take up to $4$ values including the value $0$.  Thus for the problem of classifying the saturated designs for $F_0 , F_1 , F_2, F_3 , F_4 , F_{12}, F_{13}, F_{14}, F_{23}, F_{24}, F_{34}$ by determinant  there will be only 3 classes of designs. This is because $S_5$ can only take $3$ non-zero values.  In fact we have  $|det(M_5)|$ takes value from  $ \lbrace 0, 16, 32, 48\rbrace$.  Therefore by Theorem (\ref{theorem1}),  $|det(C)|$ takes values from $\lbrace 0, 16, 32, 48\rbrace$ and $|det(D)|$ takes values from \\ $\lbrace 0, 16^{3}\times 16, 16^{3}\times 32, 16^{3}\times 48\rbrace$ . In Figure (\ref{class}) we give 10 examples in each class of saturated designs. the last class correspond to the d-optimal design class. 

\newpage
\section{Concluding Remarks}
The construction of saturated designs for two level factorial experiment has gained a substantial  interest over a long period of time. Numerous papers have been written about the classification of saturated design matrices of fixed order  via the spectrum of the determinant function.  Thus the spectra of the determinant function $S_n$ for $\lbrace -1, +1\rbrace$-matrices  of order $n$ are  well known in the literature for order up to $11$. The spectrum of order $n =8$ is due to Metropolis, Stein and Well [\cite{metropolis}]. For $n = 9$ and $n=10$,  the spectra were computed by \v{Z}ivkovi\'c [\cite{zivkovic}] and the spectrum for $n = 11$ is due to Orrick [\cite{orrick1}].
Furthermore  many other papers have studied d-optimal saturated design matrices for a  fix order.  Orrick [\cite{orrick1}] constructed a  d-optimal design matrix of order $15$.  T. Chadjipantelis, S. Kounias and C. Moyssiadis [\cite{chad}] came up with a d-optimal design of order $21$. The the d-optimal design matrix discussed by these papers is the $\lbrace -1, +1 \rbrace $-matrix of order $n$ with the largest absolute value of the determinant among all other $\lbrace -1, +1 \rbrace $-matrices of order $n$. In some sense these d-optimal design matrices are the global d-optimal design matrices.  From a statistical design perspective,  these matrices are really d-optimal if the only parameters that are important are the mean and the main effects. However when the experimenter is willing to construct a d-optimal design matrix that includes the mean, main effect and a certain number of interactions, the columns of the interactions in the design matrix are obtained by the Sch\"ur product of the appropriate columns of main effects. With that being said the columns of the interactions are deterministic once the columns of the main effects are chosen. It turns out that because of the restriction imposed by the interaction columns, a d-optimal design matrix of order $n$ for a chosen vector parameter that includes the mean , the main effects and a selected set of interaction may not be the global d-optimal design of order $n$. In many cases Theorem (\ref{theorem1}) can simplify the problem of studying the spectrum or finding a d-optimal design that includes the mean, the main effects and a selected set of interaction. The example given in subsection (\ref{example}) is a nice illustration.


\section{Acknowledgements}
This work is partially supported by the US National Science Foundation(NSF Grant 1809681).


\bibliography{uictman}

\end{document}